\begin{document}
 \baselineskip=17pt
\begin{abstract}{In this paper we prove that  some well-known  K\"ahler surfaces with zero scalar curvature are   QCH K\"ahler.   We prove that the family of generalized Taub-NUT K\"ahler surfaces parameterized by $k\in[-1,1]$ is   of   orthotoric type for $k\in(-1,1)$ and of Calabi type for $k\in\{-1,1\}$ and the Burns metric is of  Calabi type.    }\end{abstract}
\subjclass[2010]{53C55,53C25,53B35}

\keywords{K\"ahler surface, holomorphic sectional curvature, quasi constant
holomorphic sectional curvature, QCH K\"ahler surface, orthotoric K\"ahler surface
 }

   \title{Some QCH K\"ahler surfaces with zero scalar curvature.}

 \author[W\l odzimierz Jelonek]{W{\l}odzimierz Jelonek }
\address{Institute of Mathematics\\
Cracow University of Technology\\
Warszawska 24\\
31-155      Krak\`{o}w,  POLAND.}

 \email{  wjelon@pk.edu.pl}
\maketitle

\newtheorem{thm}{Theorem}[section]

\newtheorem{dfn}{Definition}[section]

\newtheorem{rem}{Remark}[section]

\newtheorem{lm}{Lemma}[section]

\newtheorem{prop}{Proposition}[section]

\newtheorem{cor}{Corollary}[section]

\newcommand\G{\Gamma}
\newcommand\bJ{\overline{J}}
\newcommand\alp{\alpha}
\newcommand\e{\epsilon}
\newcommand\n{\nabla}
\newcommand\om{\omega}
\newcommand\Om{\Omega}
\newcommand\rt{\rightarrow}
\newcommand\w{\wedge}
\newcommand\bp{\overline{\partial}}
\newcommand\De{\mathcal D}
\newcommand\p{\partial}
\newcommand\al{\alpha}
\newcommand\be{\beta}
\newcommand\g{\gamma}
\newcommand\lb{\lambda}
\newcommand\E{\mathcal E}
\newcommand\DE{\mathcal D^{\perp}}

\newcommand\0{\Omega}
\newcommand\bt{\beta}

\newcommand\s{\sigma}
\newcommand\J{\Cal J}

\newcommand\de{\delta}
\newcommand\R{\mathcal R}
\newcommand\dl{\delta}
\newcommand\ka{\kappa}
\newcommand\m{(M,g,J)}

 \section{Introduction} In the paper we study  toric  K\"ahler surfaces with zero scalar curvature  (see [W]).  K\"ahler surfaces are an object of intense study  (see  [D], [Do],  [D-T], [G-R], [N], [W]).
We show that  scalar flat K\"ahler surfaces studied by Weber in [W] are   QCH  K\"ahler surfaces and  the Burns metric is a Calabi type QCH K\"ahler surface.  In general   toric  K\"ahler manifold $(M,g,J)$ has nondegenerate
Weyl tensor  $W^-$, i.e.  it has three different eigenvalues.    QCH K\"ahler surfaces have degenerate Weyl tensor  $W^-$, it has only two eigenvalues.  What is more the almost Hermitian structure $I$ whose K\"ahler form is an
eigenvector of $W^-$ corresponding to the simple eigenvalue satisfies $\rho(I.,I.)=\rho(.,.)$, where $\rho$ is the Ricci tensor of $(M,g,J)$.

 {\it  QCH K\"ahler surfaces}   (i.e. K\"ahler surfaces with quasi constant holomorphic sectional curvature) (see [G-M], [J-1]-[J-8], [J-M], [M]) are K\"ahler surfaces  $(M, g, J)$
admitting a global, $2$-dimen\-sional, $J$-invariant distribution
$\De$ having the following property: The holomorphic curvature
$$K(\pi)=R(X, J X, J X, X)$$ of any $J$-invariant $2$-plane $\pi\subset
T_xM$, where $X\in \pi$ and $g(X, X)=1$, depends only on the point
$x$ and  number $|X_{\De}|=\sqrt{g(X_{\De},X_{\De})}$, where
$X_{\De}=p_{\De}X$ is the orthogonal projection of $X$ onto $\De$  (see [J-1]).  Every  QCH K\"ahler surface admits an opposite almost Hermitian structure $I$, (i.e. its K\"ahler form $\om_I$ is anti-selfdual), such that the Ricci tensor $\rho$ of $(M,g,J)$ is $I$-invariant and $(M,g,I)$ satisfies the second Gray condition  (see [G], p.605):

\begin{equation*}(G2) R(X, Y, Z, W)-R(IX, IY, Z, W)=\end{equation*}\begin{equation*}R(IX, Y, IZ, W)+R(IX, Y, Z, IW).\end{equation*}
 If $R$ is the curvature tensor of a
QCH K\"ahler manifold $\m$, then there exist functions $a,b,c\in
C^{\infty}(M)$ such that
\begin{equation}R=a\Pi+b\Phi+c\Psi,\end{equation} where $\Pi$ is the
standard K\"ahler tensor of constant holomorphic curvature i.e.
\begin{equation}
\Pi(X,Y,Z,U)=\frac14(g(Y,Z)g(X,U)-g(X,Z)g(Y,U)\end{equation}
\begin{equation*}+g(JY,Z)g(JX,U)-g(JX,Z)g(JY,U)-2g(JX,Y)g(JZ,U)),\end{equation*}
the tensor $\Phi$ is defined by the following relation

\begin{equation} \Phi(X,Y,Z,U)=\frac18(g(Y,Z)h(X,U)-g(X,Z)h(Y,U)\end{equation}
\begin{equation*}+g(X,U)h(Y,Z)-g(Y,U)h(X,Z)
+g(JY,Z)h(JX,U)\end{equation*}\begin{equation*}-g(JX,Z)h(JY,U)+g(JX,U)h(JY,Z)-g(JY,U)h(JX,Z)\end{equation*}
\begin{equation*}-2g(JX,Y)h(JZ,U)-2g(JZ,U)h(JX,Y))\end{equation*}and finally
\begin{equation}\Psi(X,Y,Z,U)=-h(JX,Y)h(JZ,U)=-(h_J\otimes h_J)(X,Y,Z,U).\end{equation} where $h_J(X,Y)=h(JX,Y)$ and   $h=g\circ(p_{\De}\times p_{\De})$.
In [J-1] we have proved (th.3.1 in [J-1]):
\begin{thm} {\it Let $\m$ be a K\"ahler surface.  If $\m$ is a QCH
manifold, then $W^-=c(\frac16\Pi-\Phi+\Psi)$ and $W^-$ is
degenerate.  The 2-form $\overline{\om}$ is an eigenvector of
$W^-$ corresponding to a simple eigenvalue of $W^-$ and $
\overline{J}$ preserves the Ricci tensor. On the other hand let us
assume that $\m$ admits an opposite almost complex structure $
\overline{J}$ such that $Ric(\overline{J},\overline{J})=Ric$. Let
$
\mathcal E=\operatorname{ker}(J\overline{J}-Id),\De=\operatorname{ker}(J\overline{J}+Id)$. If
$W^-=\frac{\kappa}2(\frac16\Pi-\Phi+\Psi)$ or equivalently if the
half-Weyl tensor $W^-$ is degenerate and $ \overline{\om}$ is an
eigenvector of $W^-$ corresponding to a simple eigenvalue of $W^-$
then   $\m$ is a QCH manifold.}\end{thm}

We also have
\begin{equation*}R=(\frac{\tau}6-\delta+\frac\kappa{12})\Pi+(2\delta-\frac{\kappa}2)\Phi+\frac{\kappa}2\Psi.\end{equation*}

Every QCH K\"{a}hler surface is a holomorphically
pseudosymmetric K\"{a}hler manifold  (see [O], [J-7]).  In fact
 in the
case of QCH K\"{a}hler surfaces we have
\begin{equation}  R.R=\frac16(\tau-\kappa)\Pi.R\end{equation}
where $\tau$ is the scalar curvature and $\kappa$ is the
conformal scalar curvature of $(M,g, \overline{J})$.   In fact a K\"ahler surface is a QCH K\"ahler surface if it admits an opposite almost Hermitian structure $I$ such that $\rho(I.,I.)=\rho(.,.)$, the  Weyl tensor $W^-$ is degenerate and the K\"ahler form $\om_I$ of the structure $I$ is a simple eigenvector of $W^-$.    If   $I$ is an opposite Hermitian structure and $\rho(I.,I.)=\rho(.,.)$, then  the second condition is also satisfied and $(M,g,J)$ is a QCH K\"ahler surface  (see [A-G], [J-1]).

Here we restrict to  QCH K\"ahler surfaces for which the structure $I$ is integrable.   These are essentially generalized Calabi type K\"ahler surfaces and generalized orthotoric type K\"ahler surfaces.  If additionally  $I$ is conformally K\"ahler, then  they are  Calabi type and orthotoric type K\"ahler surfaces.  The Lee form $\theta$ of  the Hermitian structure  $I$ satisfies  $d\om_I=2\theta\w\om_I$  where $\om_I(X,Y)=g(IX,Y)$ is the K\"ahler form of  $(M,g,I)$.

\begin{dfn} A generalized Calabi type QCH K\"ahler surface  is a QCH K\"ahler surface for which  $I$ is Hermitian, $|\n I|>0$  and one of the distributions $\mathcal D, \mathcal D^{\perp}$ coincides with $\mathcal D_0=\operatorname{span}\{\theta^\sharp,I\theta^\sharp\}=\operatorname{Null}I=\{X:\n_XI=0\}$ and consequently is is integrable (see [J-4],[J-6]).\end{dfn}

 \begin{dfn}A generalized orthotoric QCH  K\"ahler surface is a QCH K\"ahler surface with integrable structure $I$ with $|\n I|>0$ and such that  both  distributions  $\mathcal D, \mathcal D^{\perp}$ satisfy  $\mathcal D\cap\mathcal D_0=\mathcal D^{\perp}\cap\mathcal D_0=\{0\}$.\end{dfn}

An  orthotoric type K\"ahler surface is  a K\"ahler surface $(M,g,J)$  with a metric
\begin{align*}g=(\xi-\eta)(\frac 1{F(\xi)}d\xi^2-\frac1{G(\eta)}d\eta^2)+\frac1{\xi-\eta}(F(\xi)(dt+\eta dz)^2\\-G(\eta)(dt+\xi dz)^2)\end{align*} (see [J-1], [J-7] and references there).
These are K\"ahler surfaces with zero scalar curvature if and only if   $F(\xi)=A\xi^2+a\xi+b,  G(\eta)=A\eta^2+c\eta+d$.  Note that every orthotoric K\"ahler surface is a QCH K\"ahler surface (see [J-1]).

A QCH  K\"ahler surface with integrable $I$ is of generalized Calabi type if and only if $d(\om_J-\om_I)=\phi\w(\om_J-\om_I)$  or  $d(\om_J+\om_I)=\phi\w(\om_J+\om_I)$ for some 1-form $\phi$.
A generalized Calabi (orthotoric) K\"ahler surface is called of Calabi (orthotoric) type if the Hermitian structure $I$ is locally conformally K\"ahler. Note that the generalized Calabi type K\"ahler surface with zero scalar curvature has to be of Calabi type which follows from the classification in [J-M], [M].

\begin{dfn}  We say also that  a QCH K\"ahler surface $(M,g,J)$ is of orthotoric or of  Calabi type  if there exists an open and dense subset $U\subset M$  such that  $(U,g_{|U},J_{|U})$ is of orthotoric or of Calabi type K\"ahler surface.\end{dfn}

In the paper we   prove  that all K\"ahler surfaces investigated by Weber in [W] are  QCH K\"ahler surfaces of  orthotoric or Calabi type.   We also prove that the Burns metric  (see [L], [B])  is of Calabi type. The generalized Taub-NUT K\"ahler surfaces are orthotoric for $k\in (-1,1)$  and Calabi type for $k\in\{-1,1\}$.   The exceptional half plane surface is of Calabi type.    If $(M,g,J)$ is not hyperk\"ahler, then the opposite Hermitian structure  $I$ has the K\"ahler form proportional to the Ricci form $\rho$ of $(M,g,J)$,  which   explains why   $\rho$ is an eigenvalue of the Weyl tensor $W^-$   and why $W^-$  is degenerate,  which was noticed by Weber in [W].

\section{ Toric K\"ahler surfaces.}  We assume that $(M,g,J)$ is a K\"ahler surface admitting two linearly independent holomorphic Killing commuting vector fields  $X_1=X,X_2=Y$, i.e.  $\mathfrak{h}=\operatorname{span} \{X_1,X_2\}\subset \mathfrak{hol}(M)$.  Let  $\phi_1,\phi_2$  be potentials of the fields $X_1,X_2$ i.e. $X_i=J\nabla\phi_i$. The moment map   $(\phi_1,\phi_2):M\rightarrow \Sigma\subset\mathbb{R}^2$ produces the Riemannian projection onto the metric polytope.   Let  $\frak{h}\subset \frak{hol}(M)$   be a subalgebra of  holomorphic Killing vector fields of  $(M,g,J)$ $\frak{h}=\operatorname{span}\{X,Y\}$  where  $[X,Y]=0$.   We have $[L_X,i_Y]=i_{[X,Y]}=0$  which means   $L_Xi_Y=i_YL_X$. Hence  $d\om(X,Y)=di_Xi_Y\om=L_X(i_Y\om)-i_Xdi_Y\om=i_YL_X\om-i_XL_Y\om=0$.  Consequently  $\om(X,Y)=const$.   Now we assume that $\om(X,Y)=0$.  Assume further that $X=J\n\phi_1, Y=\n\phi_2$.   Since  $[X,JY]=J[X,Y]=0$  and $\n\phi_i$ are also holomorphic, it follows  $0=[\n\phi_1,J\n\phi_2]=J[\n\phi_1,\n\phi_2]=0$ .   It means that all vectors field $\n\phi_1,\n\phi_2,J\n\phi_1,J\n\phi_2$ commute.   Let  $\phi^1_{t},\phi^2_{t},\psi^1_{t},\psi^2_{t}$   be one-parameter diffeomorphism groups generated by  $\n\phi_1,\n\phi_2,J\n\phi_1,J\n\phi_2$.   Then the map
$(t_1,t_2,t_3,t_4)\rightarrow \phi^1_{t_1}\circ\phi^2_{t_2}\circ\psi^1_{t_3}\circ\psi^2_{t_4}$   is a parametrization of $M$  and  $\frac{\p}{\p t_1}=\n\phi_1,\frac{\p}{\p t_2}=\n\phi_2,\frac{\p}{\p t_3}=X,\frac{\p}{\p t_4}=Y$.    Note that the distributions  $\operatorname{span}\{\n\phi_1,\n\phi_2\}$,   $\operatorname{span}\{X,Y\}$ are integrable and orthogonal.  Since $X\phi_1=X\phi_2=Y\phi_1=Y\phi_2=0$,  it follows that $\phi_1,\phi_2, t_3,t_4$  is a coordinate system of  $M$   and $\frac{\p}{\p\phi_i}\in\{\n\phi_1,\n\phi_2\}$.   Hence  $\n\phi_1=g^{11}\frac{\p}{\p\phi_1}+g^{12}\frac{\p}{\p\phi_2},
\n\phi_2=g^{12}\frac{\p}{\p\phi_1}+g^{22}\frac{\p}{\p\phi_2}$.   Note that    $g(\n\phi_1,\n\phi_1)=(g^{11})^2g_{11}+2g^{11}g^{12}g_{12}+(g^{22})^2g_{22}=g^{11}$  and  $$g(\n\phi_1,\n\phi_2)=g^{12}, g(\n\phi_2,\n\phi_2)=g^{22}.$$ Let    $$[G^{ij}]=\begin{pmatrix} g(X,X)&g(X,Y)\\g(X,Y)&g(Y,Y)\end{pmatrix}.$$   Then    $[G^{ij}]=[g^{ij}]$  and the metric is
$$g=G_{ij}d\phi_id\phi_j+G^{ij}d\theta_id\theta_j$$   where  $\theta_1=t_3,\theta_2=t_4$.  We write  $g_{\Sigma}=G_{ij}d\phi_id\phi_j$.  On the toric K\"ahler surfaces with zero scalar curvature the volumetric normal coordinates $(x,y)$ on $\sigma$ are defined.  We have  $x=\sqrt{|X_1|^2|X_2|^2-<X_1,X_2>^2}$  which is a harmonic function on $(\Sigma,g_{\Sigma})$ and  $y=-*dx$ (for the choice of orientation $*$ see [W]).

Note that   $\om=d\phi_1\w d\theta_1+d\phi_2\w d\theta_2$ is a K\"ahler form, since   $g(J\n\phi_i,\frac{\p}{\p\theta_j})=g(X_i, X_j)=g^{ij}$.

\section{ Generalized Taub-NUT surfaces}   The generalized Taub-NUT surfaces were first described by Donaldson in [Do], (see also [W]). For the generalized Taub-NUT surfaces   we  have $M=\mathbb C^2$ and  (see [W])

\begin{align*}\psi_1=\frac1{\sqrt{2}}(-y+\sqrt{x^2+y^2})+\frac{\alpha}2x^2,\\\psi_2=\frac1{\sqrt{2}}(y+\sqrt{x^2+y^2})+\frac{\beta}2x^2\end{align*}

and

\begin{equation*}  g_{\Sigma}=\frac{1+2M(ky+\sqrt{x^2+y^2})}{\sqrt{x^2+y^2}}(dx\otimes dx+dy\otimes dy),\end{equation*}
where $x,y$ are volumetric coordinates on $\Sigma$,  $k=\frac{\alpha-\beta}{\alpha+\beta}, M=\frac{\alpha+\beta}{2\sqrt{2}}, \alpha\ge0,\beta\ge0$.

Note that for $k=0$ we get  standard Taub-NUT  hyperk\"ahler surface.

\begin{thm}  Let  $(M,g,J)$  be a generalized Taub-NUT K\"ahler surface  with $k\in(-1,1)$.  Then   $(M,g,J)$  is an orthotoric type QCH  K\"ahler surface with a metric (on an open and dense subset) \begin{equation*}g=(\xi-\eta)(\frac 1{F(\xi)}d\xi^2-\frac1{G(\eta)}d\eta^2)+\end{equation*}\begin{equation*}\frac1{\xi-\eta}(F(\xi)(dt+\eta dz)^2-G(\eta)(dt+\xi dz)^2),\end{equation*} where   $F(\xi)=a\xi+b,  G(\eta)=c\eta+d$,  and  $a,c>0$, $da-bc>0$.   We also have $M=\frac{(c+a)a^2c^2}{4(da-bc)^2}$, $k=\frac{a-c}{a+c}$.\end{thm}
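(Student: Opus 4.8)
The plan is to compute the reduced data of the orthotoric metric with $F(\xi)=a\xi+b$, $G(\eta)=c\eta+d$ — which has zero scalar curvature by the criterion recalled above — and to match it with Weber's data for the generalized Taub-Nut surface. This suffices, because a toric K\"ahler surface is recovered from the triple $(\phi_1,\phi_2,g_\Sigma)$: in action--angle form $g=g_\Sigma+u^{ij}d\theta_id\theta_j$ with $u^{ij}=g(X_i,X_j)$, and writing $g_\Sigma=\rho\,(dx^2+dy^2)$ one has $g(X_i,X_j)=\tfrac1\rho\,(\p_x\phi_i\p_x\phi_j+\p_y\phi_i\p_y\phi_j)$ for $X_i=J\n\phi_i$, so $(g_\Sigma,\phi_1,\phi_2)$ determines $g$.

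First I would record the moment data of the orthotoric metric. Its K\"ahler form is $\om_J=d\big((\xi+\eta)\,dt+\xi\eta\,dz\big)$, so $\p_t,\p_z$ have Killing potentials $\mu=\xi+\eta$ and $\nu=\xi\eta$, and a direct computation gives $g(\p_t,\p_t)g(\p_z,\p_z)-g(\p_t,\p_z)^2=-F(\xi)G(\eta)$. The decisive change of variables is $F(\xi)=\tfrac{a^2}{4}p^2$, $-G(\eta)=\tfrac{c^2}{4}q^2$, legitimate precisely because $a,c>0$ and $F>0>G$ on the relevant range; it turns the base part $(\xi-\eta)\big(\tfrac{d\xi^2}{F}-\tfrac{d\eta^2}{G}\big)$ into the conformally flat $(\xi-\eta)(dp^2+dq^2)$ and makes $\mu,\nu,\xi-\eta$ explicit quadratics in $(p,q)$. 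Then $x_0=\sqrt{-F(\xi)G(\eta)}=\tfrac{ac}{4}pq$ is harmonic for $g_\Sigma$ and equals $\sqrt{g(\p_t,\p_t)g(\p_z,\p_z)-g(\p_t,\p_z)^2}$; its harmonic conjugate is $y_0=\tfrac{ac}{8}(p^2-q^2)$ (up to orientation), and the holomorphic substitution $x_0+iy_0=\tfrac{iac}{8}(p-iq)^2$ yields $r:=\sqrt{x_0^2+y_0^2}=\tfrac{ac}{8}(p^2+q^2)$ and $dp^2+dq^2=\tfrac{2}{acr}(dx_0^2+dy_0^2)$. Substituting back,
\[ g_\Sigma=\frac{2\big[(a+c)r+(a-c)y_0+(da-bc)\big]}{a^2c^2\,r}\,(dx_0^2+dy_0^2), \]
while $\mu,\nu$ become affine in $r$, $y_0$ and $x_0^2$.

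The last step is the comparison. Weber's potentials satisfy $\phi_i=A_{i1}\mu+A_{i2}\nu+\mathrm{const}_i$, since the Killing fields $X_i=J\n\phi_i$ are fixed linear combinations of $\p_t,\p_z$; matching the $r,y_0,x_0^2$ pattern against Weber's formulas for $\phi_1,\phi_2$ determines $A=(A_{ij})$ and the additive constants (and expresses Weber's $\al,\be$ through $a,b,c,d$), and it forces the Taub-Nut volumetric coordinates to be the rescaled $x=\ka x_0$, $y=\ka y_0$ with $\ka=|\det A|$. Rewriting the displayed $g_\Sigma$ in $(x,y)$ and comparing term by term with $\tfrac{1+2M(ky+\sqrt{x^2+y^2})}{\sqrt{x^2+y^2}}(dx^2+dy^2)$ gives $\ka=\tfrac{2(da-bc)}{a^2c^2}$, and hence $M=\tfrac{(a+c)a^2c^2}{4(da-bc)^2}$ and $k=\tfrac{a-c}{a+c}$ (the orientation of $y$ being chosen so that $k$ has this sign); in particular $da-bc>0$ is forced, as $\ka=|\det A|>0$. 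Conversely every pair $(M,k)$ with $M>0$, $k\in(-1,1)$ arises from some $(a,b,c,d)$ with $a,c>0$ and $da-bc>0$ — for instance $a=1+k$, $c=1-k$, $b=0$, $d=\tfrac{1-k}{\sqrt{2M}}$ — and these inequalities hold exactly on this open interval. Since the reduced data determines the surface, the generalized Taub-Nut metric with $k\in(-1,1)$ equals, on the open dense set $\{d\xi\w d\eta\neq0,\ \xi\neq\eta\}$, the stated orthotoric metric; being an orthotoric K\"ahler surface there, it is an orthotoric type QCH K\"ahler surface with the claimed $M$ and $k$.

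I expect the main obstacle to be the middle step: producing the harmonic conjugate of $x_0=\sqrt{-FG}$, recognizing the holomorphic substitution $x_0+iy_0\propto(p-iq)^2$ that rewrites everything through $r$ and $y_0$ (and so eliminates $x_0$ from $g_\Sigma$), and then tracking the normalization $\ka=|\det A|$ — which appears only because Weber's commuting Killing fields are nontrivial combinations of $\p_t,\p_z$, not those fields themselves — through all the constants so that the exact expressions for $M$ and $k$ emerge. At $k=\pm1$ one has $a=0$ or $c=0$, so $F$ or $G$ is constant and the substitution $F=\tfrac{a^2}{4}p^2$ degenerates; there the orthotoric form collapses to a Calabi type metric, which is why the orthotoric range is the open interval and the endpoints are treated separately.
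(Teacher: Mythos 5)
Your proposal is correct and follows essentially the same route as the paper: pass to isothermal coordinates via $F=\tfrac{a^2}4 p^2$, $-G=\tfrac{c^2}4 q^2$, introduce the volumetric coordinates $x=\sqrt{-FG}$ with harmonic conjugate $y$, track the rescaling $|\det A|$ coming from the change of basis of the Killing fields, and match $(g_\Sigma,\phi_1,\phi_2)$ against Weber's data to extract $k=\tfrac{a-c}{a+c}$ and $M=\tfrac{(a+c)a^2c^2}{4(da-bc)^2}$. Your explicit remark that every pair $(M,k)$ with $k\in(-1,1)$ is realized (e.g.\ $a=1+k$, $c=1-k$, $b=0$, $d=\tfrac{1-k}{\sqrt{2M}}$) is a small but welcome completion that the paper leaves implicit.
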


\begin{proof}Let $F(\xi)=a\xi+b,  G(\eta)=c\eta+d$  where   $a,c>0$  and  $\xi\ge \xi_0=-\frac ba,  \eta\le\eta_0=-\frac dc$. Assume that  $-\frac ba+\frac dc>0$,  which means  $da-bc>0$.   Consider  a K\"ahler surface $(M,g,J)$  with a metric
\begin{equation*}g=(\xi-\eta)(\frac 1{F(\xi)}d\xi^2-\frac1{G(\eta)}d\eta^2)+\frac1{\xi-\eta}(F(\xi)(dt+\eta dz)^2-G(\eta)(dt+\xi dz)^2).\end{equation*}

  The fields  $X_1=\frac{\p}{\p t}, X_2=\frac{\p}{\p z}$  are holomorphic Killing with potentials   $\phi_1=\xi+\eta, \phi_2= \xi\eta$  respectively  (see [A-C-G]).  Note that $g(X_1,X_1)=\frac1{\xi-\eta}(F-G),
g(X_2,X_2)=\frac1{\xi-\eta}(F\eta^2-G\xi^2),   g(X_1,X_2)=\frac1{\xi-\eta}(\eta F-\xi G)$.

   Let $\widetilde{x}=\frac2a\sqrt{a\xi+b},  \widetilde{y}=-\frac2c\sqrt{-(c\eta+d)}$  be isothermal coordinates on  $\Sigma$,  $\widetilde{z}=\widetilde{x}+i\widetilde{y}$.  Then the metric on $\Sigma$ is   $g_{\Sigma}=(\xi-\eta)(d\widetilde{x}^2+d\widetilde{y}^2)$.

  The area of a parallelogram defined by $X_1,X_2$  is   $\mathcal V=\sqrt{-FG}$.  Let us introduce the volumetric coordinates  $x=\sqrt{-FG}$  and $y$   such that   $dy=*dx$ where we assume orientation of $\Sigma$ such that
  $*d\xi=\sqrt{-\frac{F(\xi)}{G(\eta)}}d\eta, *d\eta=-\sqrt{-\frac {G(\eta)}{F(\xi)}}d\xi$. Then   $y=\frac c2\xi+\frac a2\eta+e$  where  $e=\frac{cb}{2a}+\frac{ad}{2c}$  we choose in such a way that  $(\xi_0,\eta_0)$ has new coordinates $(0,0)$.  If  $z=x+iy$,    then  $z=i\frac{ac}8\widetilde{z}^2$.   Hence  $dx^2+dy^2=a^2c^2\frac{|\widetilde{z}|^2}{16}(d\widetilde{x}^2+d\widetilde{y}^2)$.  Note that in [W] the orientation of $\Sigma$ is the same as in our paper and it should be $dy=J_{\Sigma}dx=*dx$ where the orientation form is $\om_{\Sigma}(X,Y)=g(J_{\Sigma}X,Y)=\frac1{\sqrt{\mathcal V}}d\psi_1\wedge d\psi_2$.

Consequently    $g_{\Sigma}=\frac{16(\xi-\eta)}{a^2c^2|\widetilde{z}|^2}(dx^2+dy^2)=\frac{4(\xi-\eta)}{c^2F-a^2G}(dx^2+dy^2)$.

   Note that $\sqrt{x^2+y^2}=(\frac c2\xi-\frac a2\eta+\gamma)$  where    $\gamma=\frac{bc^2-da^2}{2ac}$.  Define a Lie subalgebra $\mathfrak{h}=\operatorname{span}\{X_1,X_2\}\subset\mathfrak{hol}(M)$ of the Lie algebra of holomorphic Killing  vector fields on $M$.  If we change a basis of this algebra   $Y_1=AX_1+BX_2,Y_2=CX_1+DX_2$,   then for potentials   $\psi_1,\psi_2$  of the fields $Y_1,Y_2$   we get
$\psi_1=A\phi_1+B\phi_2,\psi_2=C\phi_1+D\phi_2$ (up to a constant).  We also have  $|det(Y_1,Y_2)|=|AD-BC||det(X_1,X_2)|$.   Hence the appropriate volumetric coordinates satisfy  $z_1=|AD-BC|z=tz$.

Note that

\begin{align*}\xi+\eta+\frac ba+\frac dc=\frac1c(y+\sqrt{x^2+y^2})-\frac1a(-y+\sqrt{x^2+y^2}), \\ \xi+\frac ba=\frac1c(y+\sqrt{x^2+y^2}), \eta+\frac dc=-\frac1a(-y+\sqrt{x^2+y^2}),\end{align*}
and  $$-x^2=ac\xi\eta+da\xi+cb\eta+db.$$
Now we find how to change the basis to obtain the moment mappings from the Weber work.

We have $$\psi_2=\frac1{\sqrt2}(y'+\sqrt{(x')^2+(y')^2})+B(x')^2,\psi_1=\frac1{\sqrt2}(-y'+\sqrt{(x')^2+(y')^2})+A(x')^2$$ where $\alpha=2A, \beta=2B$ in  new volumetric coordinates corresponding to  $\psi_1,\psi_2$.

Hence   $x'=t x,  y'=t y$ where $t\in\mathbb R, t>0$ is a constant.

Note that $\psi_2=\frac1{\sqrt2}(y'+\sqrt{(x')^2+(y')^2})+B(x')^2=t\frac1{\sqrt2}(y+\sqrt{x^2+y^2})+t^2Bx^2=  t \frac c{\sqrt{2}}\xi+t\frac{bc}{\sqrt{2}a}-t^2Bac\xi\eta-t^2Bda\xi-t^2Bbc\eta-t^2Bdb= (\frac1{\sqrt{2}}t c-t^2Bda)\xi-t^2Bbc\eta-t^2Bac\xi\eta+t\frac{bc}{\sqrt{2}a}-t^2Bdb$  has to be a linear combination of $\xi+\eta,\xi\eta$ and a constant.

Hence    $B=\frac c{t\sqrt{2}(da-bc)}$.

We have $\psi_1=\frac1{\sqrt2}(-y'+\sqrt{(x')^2+(y')^2})+A(x')^2=t\frac1{\sqrt2}(-y+\sqrt{x^2+y^2})+t^2Ax^2=  -t \frac a{\sqrt{2}}\eta-t\frac{da}{\sqrt{2}c}-t^2Aac\xi\eta-t^2Ada\xi-t^2Abc\eta-t^2Adb$.   Similarly for $\psi_1$ we have  $A=\frac a{t\sqrt{2}(da-bc)}$.   Consequently $\alpha=2A=\frac {2a}{t\sqrt{2}(da-bc)}, \beta=2B=\frac {2c}{t\sqrt{2}(da-bc)}$,

$\psi_1=-\frac{t da^2}{\sqrt{2}(da-bc)}\phi_1-\frac{t a^2c}{\sqrt{2}(da-bc)}\phi_2+C_2$

$\psi_2=-\frac{t bc^2}{\sqrt{2}(da-bc)}\phi_1-\frac{t ac^2}{\sqrt{2}(da-bc)}\phi_2+C_1,$  where $C_1,C_2$ are constant.

Hence the area of a parallelogram is   $t\mathcal V=\frac{t^2a^2c^2}{2(da-bc)}\mathcal V$. Consequently  $t=\frac{2(da-bc)}{a^2c^2}$.

To show that generalized  Taub-NUT  and orthotoric  with $F,G$ are isometric we have to prove that

$$\frac{4(\xi-\eta)}{c^2F-a^2G}=\frac{t(1+2t M(ky+\sqrt{x^2+y^2}))}{\sqrt{x^2+y^2}}.$$  We prove that  $k=\frac{a-c}{a+c}$  and  $M=\frac{(c+a)a^2c^2}{4(da-bc)^2}$.
Note that   $c^2F-a^2G=2ac\sqrt{x^2+y^2}$.   Hence we have to show that  $4(\xi-\eta)=2act(1+2t M(ky+\sqrt{x^2+y^2}))$.

\begin{align*} ky+\sqrt{x^2+y^2}=k(\frac a2\eta+\frac c2\xi+e)-\frac a2\eta+\frac c2\xi+\gamma=\\(k-1)\frac a2\eta+(k+1)\frac c2\xi+ke+\gamma\end{align*}  and  $(1-k)\frac a2=(k+1)\frac c2$   thus  $k=\frac{a-c}{a+c}$ and
$t^2 ac^2 M \frac{2a}{c+a}=2$  and   $M \frac{4(da-bc)^2}{a^2c^2(c+a)}=1$.  It implies   $M=\frac{(c+a)a^2c^2}{4(da-bc)^2}$.  Then   $2t M(ke+\gamma)=-1$.
  \end{proof}
Now we introduce global coordinates on  $M=\mathbb C^2$. Let us introduce coordinates  (compare [W])
$u=\frac12\sqrt{M t c a}\widetilde{x}, v=-\frac12\sqrt{M t c a}\widetilde{y}$.  Then $$ \xi=\frac{u^2}{Mt c}-\frac ba,  \eta=-\frac{v^2}{Mt a}-\frac dc,$$  $$ Md\xi=\frac{2udu}{t c},  Md\eta=-\frac{2vdv}{t a}.$$    We also have
$g_{\Sigma}=\frac 2M (1+(1+k)u^2+(1-k)v^2)(du^2+dv^2)$  and change the coordinates  $\theta_i$   by   $\theta_i'=\frac1{\sqrt{2}}\theta_i$.    Then   (we denote  $\theta_i'$ back by  $\theta_i$)
$g=g_{\Sigma}+G^{ij}d\theta_id\theta_j$,   where

\begin{align*} MG^{11}=\frac{2v^2\left((1+(1+k)u^2)^2+(1+k)^2u^2v^2\right)}{1+(1+k)u^2+(1-k)v^2},\\  MG^{12}=MG^{21}=\frac{2u^2v^2\left(2+(1-k^2)(u^2+v^2)\right)}{1+(1+k)u^2+(1-k)v^2},\\
MG^{22}=\frac{2u^2\left((1+(1-k)v^2)^2+(1-k)^2u^2v^2\right)}{1+(1+k)u^2+(1-k)v^2}.\end{align*}

Let us introduce  new coordinates  $$x_1=v\cos\theta_1,  y_1=v\sin\theta_1,  x_2=u\cos\theta_2,  y_1=u\sin\theta_2.$$    We get

$\frac{\p}{\p x_1}=\cos\theta_1\frac{\p}{\p v}-\frac1v\sin\theta_1\frac{\p}{\p\theta_1},  \frac{\p}{\p y_1}=\sin\theta_1\frac{\p}{\p v}+\frac1v\cos\theta_1\frac{\p}{\p\theta_1}$,

$\frac{\p}{\p x_2}=\cos\theta_2\frac{\p}{\p u}-\frac1u\sin\theta_2\frac{\p}{\p\theta_2},  \frac{\p}{\p y_2}=\sin\theta_2\frac{\p}{\p u}+\frac1u\cos\theta_2\frac{\p}{\p\theta_2}$.

Consequently

$\frac M2 g(\frac{\p}{\p x_1},\frac{\p}{\p x_1})=(1-k)x_1^2+\frac{(1+(1+k)u^2)^2+(k-1)x_1^2(1+(1+k)u^2)+y_1^2(1+k)^2u^2}{1+(1+k)u^2+(1-k)v^2}$,

$ \frac M2g(\frac{\p}{\p x_1},\frac{\p}{\p y_1})=(1-k)x_1y_1-\frac{(1+k)^2u^2x_1y_1}{1+(1+k)u^2+(1-k)v^2}+\frac{(1+(1+k)u^2)(1-k)x_1y_1}{1+(1+k)u^2+(1-k)v^2}$,

$\frac M2 g(\frac{\p}{\p y_1},\frac{\p}{\p y_1})=(1-k)y_1^2+\frac{(1+(1+k)u^2)^2+(1+(1+k)u^2)(k-1)y_1^2+x_1^2(1+k)^2u^2}{1+(1+k)u^2+(1-k)v^2}$,

$\frac M2 g(\frac{\p}{\p x_1},\frac{\p}{\p x_2})=y_1y_2\frac{2+(1-k^2)(u^2+v^2)}{1+(1+k)u^2+(1-k)v^2}$,

$\frac M2 g(\frac{\p}{\p x_1},\frac{\p}{\p y_2})=-y_1x_2\frac{2+(1-k^2)(u^2+v^2)}{1+(1+k)u^2+(1-k)v^2}$,

$\frac M2 g(\frac{\p}{\p x_2},\frac{\p}{\p x_2})=(1+k)x_2^2+\frac{(1+(1-k)v^2)^2+(-(k+1)x_2^2(1+(1-k)v^2)+y_2^2(1-k)^2v^2}{1+(1+k)u^2+(1-k)v^2}$,

$ \frac M2 g(\frac{\p}{\p x_2},\frac{\p}{\p y_2})=(1+k)x_2y_2-\frac{(1-k)^2v^2x_2y_2}{1+(1-k)v^2+(1+k)u^2}+\frac{(1+(1-k)v^2)(1+k)x_2y_2}{1+(1+k)u^2+(1-k)v^2}$,

$\frac M2 g(\frac{\p}{\p y_2},\frac{\p}{\p y_2})=(1+k)y_2^2+\frac{(1+(1-k)v^2)^2+(1+(1-k)v^2)(-k-1)y_2^2+x_2^2(1-k)^2v^2}{1+(1+k)u^2+(1-k)v^2}$,

$\frac M2 g(\frac{\p}{\p y_1},\frac{\p}{\p y_2})=x_1x_2\frac{2+(1-k^2)(u^2+v^2)}{1+(1+k)u^2+(1-k)v^2}$,

$\frac M2 g(\frac{\p}{\p y_1},\frac{\p}{\p x_2})=-x_1y_2\frac{2+(1-k^2)(u^2+v^2)}{1+(1+k)u^2+(1-k)v^2}$.

Since   $u^2=x_2^2+y_2^2,  v^2=x_1^2+y_2^2$,   it follows  $g$  is defined in coordinates  $z_1=x_1+iy_2,z_2=x_2+iy_2$ on the whole of   $\mathbb C^2$.

  On $int(\Sigma)\times T^2$  we have coordinates   $u,v,\theta_1,\theta_2$  where  $u>,v>0,  \theta_1\in [0,2\pi),\theta_2\in[0,2\pi)$  (in fact  $\theta_i$  are coordinates on  $T^2=\mathbb R^2\slash 2\pi\mathbb Z^2$).  It is easy to check that the maps   $u,v,\theta_1,\theta_2$ and  $x_1,y_1,x_2,y_2$   agree on $int(\Sigma)\times T^2$  hence for  $u>0,v>0$   and   $int(\Sigma)\times T^2$ in  new coordinates $z_1=x_1+iy_2,z_2=x_2+iy_2$ is $(\mathbb C-\{0\})\times(\mathbb C-\{0\})$.
The basis of Killing vector fields is $Y_1= x_1\frac{\p}{\p y_1}-y_1\frac{\p}{\p x_1}, Y_2= x_2\frac{\p}{\p y_2}-y_2\frac{\p}{\p x_2}$. In  the  open set  $\{\xi>\xi_0,\eta<\eta_0$  the  distributions $\operatorname{ker} (IJ-I),\operatorname{ker}(IJ+I)$ are disjoint  from $Null I$ and there our surface is orthotoric.  Now we want to see what happens if  $\xi=\xi_0$ or  $\eta=\eta_0$. We will show that then one of distributions $\operatorname{ker} (IJ-I),\operatorname{ker}(IJ+I)$ coincides with $Null I$.  The (real) surface  $L=\{Y_1=0\}$   in  new coordinates is the plane
$x_1=y_1=0$.    We shall show that it coincides with one of the distributions  $\mathcal D,  \mathcal E$  from the definition of  $QCH$  K\"ahler surfaces.  Let  $TL$ be a tangent space of  $L$.

Let   $T=\n Y_1,  S=\n Y_2$.   Since on   $L$  we have  $0=(L_{Y_1}-\n_{Y_1})I=[T,I]$   and  $[T,J]=0$,  it follows that    $\operatorname{ker} T=TL$  is   $I,J$  invariant.    The equality   $dim  TL=2$   implies
$I=J$  on  $TL$  or  $I=-J$  on  $TL$.  Hence   $TL$   coincides with  $\operatorname{ker}(IJ-id)$   or   $\operatorname{ker}(IJ+id)$  hence with  $\mathcal D$  or  $\mathcal E$.  Note that   $R(Y_1,Y_2)=[T,S]$.  Consequently  on $TL$
  we have$[T,S]=0$.  It implies   $S(TL)\subset  TL$.   Thus   $S=a I$  on  $TL$   which means   $\n_{Y_2}I=[S,I]=0$  on   $L$.  Therefore    $Y_2, JY_2\in  TL$  and   $Null(I)=\operatorname{span}(Y_2, JY_2)=TL$ where  $Null(I)=\{X:\n_XI=0\}$.   Note that at a point    $(0,0,0,0)$   we have  $\n I=0$,   since    $\|\n (\xi-\eta)\|^2= \frac1{\xi-\eta}(F(\xi)-G(\eta)$  tends to $0$, when  $\xi\rightarrow \xi_0, \eta\rightarrow  \eta_0$.

We find  the K\"ahler forms $\omega_J,\omega_I$.  We have  (where again   $\theta_i'=\theta_i$)

$\omega_J=\sqrt{2}(d\phi_1\wedge d\theta_1+d\phi_2\wedge d\theta_2)$   where   $\phi_1=\frac{v^2}{\sqrt{2}M}(1+(1+k)u^2),  \phi_2=\frac{u^2}{\sqrt{2}M}(1+(1-k)v^2)$. It is easy to show

$\frac M2\omega_J=(1+(1+k)u^2)dx_1\wedge dy_1+(1+(1-k)v^2)dx_2\wedge dy_2+(1+k)(x_2dx_2+y_2dy_2)\wedge(x_1dy_1-y_1dx_1)+(1-k)(x_1dx_1+y_1dy_1)\wedge(x_2dy_2-y_2dx_2)$.

Note that  $\om_I=d\xi\wedge(dt+\eta dz)-d\eta\wedge(dt+\xi dz)$  (see [A-C-G]).  Hence

$\frac M2\omega_I=-(1+(1+k)u^2)dx_1\wedge dy_1+(1+(1-k)v^2)dx_2\wedge dy_2+(1+k)(x_2dx_2+y_2dy_2)\wedge(x_1dy_1-y_1dx_1)-(1-k)(x_1dx_1+y_1dy_1)\wedge(x_2dy_2-y_2dx_2)$.
\section{  Exceptional generalized  Taub-NUT.}

\begin{thm}  The exceptional generalized Taub-NUT  surface  is a QCH K\"ahler surface of Calabi type.\end{thm}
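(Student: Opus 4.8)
The plan is to identify the exceptional generalized Taub-Nut surface with the boundary member $k=1$ of the family studied in Section~2 (the case $k=-1$ being equivalent after interchanging the two factors of $\mathbb C^2$). It is \emph{exceptional} precisely because at $k=1$ the orthotoric parameter $c$ degenerates to $0$: Theorem~2.1 no longer applies and the surface ceases to be of orthotoric type, so what must survive is only the weaker Calabi structure. The construction of the opposite integrable Hermitian structure $I$ and the explicit formulas for $\omega_J$ and $\omega_I$ obtained in Section~2 do not use $k\ne\pm1$, hence they remain valid at $k=1$, and it suffices to prove that at $k=1$ the surface is of Calabi type.

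First I would invoke the criterion recalled in the introduction: a QCH K\"ahler surface with integrable $I$ is of generalized Calabi type if and only if $d(\omega_J-\omega_I)=\phi\wedge(\omega_J-\omega_I)$ or $d(\omega_J+\omega_I)=\phi\wedge(\omega_J+\omega_I)$ for some $1$-form $\phi$. Setting $k=1$ in the Section~2 formulas kills the term carrying the factor $1-k$ and leaves, with $u^2=x_2^2+y_2^2$,
\[ \tfrac{M}{4}\,(\omega_J+\omega_I)=(1+2u^2)\,dx_1\wedge dy_1 . \]
Since $d\bigl[(1+2u^2)\,dx_1\wedge dy_1\bigr]=2\,d(u^2)\wedge dx_1\wedge dy_1=d\log(1+2u^2)\wedge(1+2u^2)\,dx_1\wedge dy_1$, the criterion is met with $\phi=d\log(1+2u^2)$, so the surface is of generalized Calabi type on the open dense set where $|\nabla I|>0$ (the metric is real-analytic and not hyperk\"ahler, since $k=1\ne0$, hence $\nabla I\not\equiv0$).

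To upgrade generalized Calabi type to Calabi type I would verify that $I$ is conformally K\"ahler. With $k=1$ the Section~2 formula reads
\[ \tfrac{M}{2}\,\omega_I=(1+2u^2)\,dx_1\wedge dy_1-dx_2\wedge dy_2-2(x_2\,dx_2+y_2\,dy_2)\wedge(x_1\,dy_1-y_1\,dx_1), \]
and, using that $x_2\,dx_2+y_2\,dy_2=\tfrac12 d(u^2)$ is closed and $d(x_1\,dy_1-y_1\,dx_1)=2\,dx_1\wedge dy_1$, a short computation gives $d\omega_I=\tfrac{8}{M}\,d(u^2)\wedge dx_1\wedge dy_1$. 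Comparing with $d\omega_I=2\theta\wedge\omega_I$ forces $\theta=\tfrac{2}{1+2u^2}\,d(u^2)=d\log(1+2u^2)$, an exact form; hence $(1+2u^2)^{-2}g$ is a K\"ahler metric for $I$, i.e. $I$ is (globally) conformally K\"ahler. By the remark of the introduction, a generalized Calabi type QCH K\"ahler surface with conformally K\"ahler $I$ is of Calabi type, which proves the theorem. Alternatively, since the surface is not hyperk\"ahler, $\omega_I$ is proportional to the closed Ricci form $\rho$ of $(M,g,J)$, which again forces the Lee form to be exact.

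The only delicate point is the first step: one must make sure that the QCH structure, the integrability of $I$ and the formulas for $\omega_I,\omega_J$ genuinely persist at the boundary value $k=1$, where the ansatz $F=a\xi+b$, $G=c\eta+d$ degenerates. This can be settled either by continuity from $k\in(-1,1)$ or, more safely, by rerunning the coordinate computation of Section~2 verbatim with $k=1$, all of whose identities remain meaningful there. Granting this, the rest is just the two $2$-form computations above, and no substantial obstacle remains.
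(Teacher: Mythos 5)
Your proposal is correct and follows essentially the same route as the paper: the paper likewise takes the $k=1$ data, forms $\tfrac12(\omega_J+\omega_I)=\tfrac{2v}{\sqrt2}(1+2u^2)\,dv\wedge d\theta_1$ (your $(1+2u^2)\,dx_1\wedge dy_1$ in Cartesian rather than polar coordinates) and checks $d(\omega_J+\omega_I)=d\ln(1+2u^2)\wedge(\omega_J+\omega_I)$ to conclude Calabi type. Your additional explicit verification that the Lee form $\theta=d\log(1+2u^2)$ is exact (so $I$ is conformally K\"ahler, upgrading generalized Calabi to Calabi type) is a point the paper leaves implicit, but it is a refinement of the same argument, not a different one.
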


\begin{proof}  In an exceptional generalized  Taub-NUT corresponding  to  $k=1$  we have   $g_{\Sigma}= 2(1+2u^2)(du^2+dv^2)$ and  $g=g_{\Sigma}+G^{ij}d\theta_id\theta_j$,   where

$MG^{11}=\frac{v^2((1+2u^2)^2+4u^2v^2}{1+2u^2},  MG^{12}=MG^{21}=\frac{2u^2v^2(u^2+v^2)}{1+2u^2}$,

$MG^{22}=\frac{u^2}{1+2u^2}$.
We also have  $\omega_J=\frac{2v}{\sqrt{2}}(1+2u^2)dv\wedge d\theta_1+\frac{4uv^2}{\sqrt{2}}du\wedge d\theta_1+\frac{2u}{\sqrt{2}}du\wedge d\theta_2$,
 $\omega_I=\frac{2v}{\sqrt{2}}(1+2u^2)dv\wedge d\theta_1-\frac{4uv^2}{\sqrt{2}}du\wedge d\theta_1-\frac{2u}{\sqrt{2}}du\wedge d\theta_2$,

$I\frac{\p}{\p u}=-\sqrt{2}\frac{(1+2u^2)}{u}\frac{\p}{\p\theta_2},   I\frac{\p}{\p v}=-2\sqrt{2}v\frac{\p}{\p\theta_2}+\frac{\sqrt{2}}v\frac{\p}{\p\theta_1}$

and
$I\frac{\p}{\p\theta_2}=\frac1{\sqrt{2}}\frac{u}{1+2u^2}\frac{\p}{\p u},  I\frac{\p}{\p\theta_1}=\sqrt{2}\frac {u^2}{1+2u^2}\frac{\p}{\p u}-\frac{\sqrt{2}}2v\frac{\p}{\p v}$.

One can easily  check that   $N_I(X,Y)=0$, where $N_I$ is the Nijenhuis tensor of the almost Hermitian structure $I$.   Hence  $I$ is integrable.
We have  $\frac12(\omega_J-\omega_I)=\frac{4uv^2}{\sqrt{2}}du\wedge d\theta_1+\frac{2u}{\sqrt{2}}du\wedge d\theta_2, \phi=\frac12(\omega_J+\omega_I)=\frac{2v}{\sqrt{2}}(1+2u^2)dv\wedge d\theta_1$.

Consequently $d\phi=\frac{8vu}{\sqrt{2}}du\wedge dv\wedge d\theta_1=\frac{4u}{1+2u^2}du\wedge\phi=d\ln(1+2u^2)\wedge\phi$ and $(M,g,I)$  is conformally K\"ahler.     It follows that exceptional Taub-NUT is of Calabi type.\end{proof}

\section{   Exceptional half plane.}   Let  $(M,g,J)$ be an exceptional half plane instanton. Then we  have $M=\mathbb C^2$.  Recall that  $g=(1+x^2)(dx^2+dy^2)+G^{ij}d\theta_id\theta_j$,  where    $G^{11}=\frac{x^2}{1+x^2}, G^{12}=G^{21}=\frac{2x^2y}{1+x^2},  G^{22}=\frac{(1+x^2)^2+4x^2y^2}{1+x^2}$  (see [W]).

\begin{thm}  An exceptional  half plane  is a QCH K\"ahler surface of  Calabi  type.\end{thm}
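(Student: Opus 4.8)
The plan is to proceed exactly as in the proof of Theorem 3.2. First, following Weber [W], I realize the exceptional half plane surface as a toric K\"ahler surface, $M=\mathbb C^2$, with the metric $g=(1+x^2)(dx^2+dy^2)+G^{ij}d\theta_id\theta_j$ and $G^{ij}$ as recalled above, carrying two commuting holomorphic Killing fields $X_1=\frac{\p}{\p\theta_1},\,X_2=\frac{\p}{\p\theta_2}$; here $x=\sqrt{|X_1|^2|X_2|^2-\langle X_1,X_2\rangle^2}$ is the harmonic coordinate on $\Sigma$ and $y=\ast dx$. Reading off the moment map $(\phi_1,\phi_2)$ of $(X_1,X_2)$ from the entries of $G^{ij}$ (with $\phi_1$ depending on $x$ only, as is forced by the shape of $G^{11}$) yields the K\"ahler form $\om_J=d\phi_1\w d\theta_1+d\phi_2\w d\theta_2$ in the coordinates $(x,y,\theta_1,\theta_2)$, and then, as in Section 2, the global coordinates $z_1,z_2$ on $\mathbb C^2$.

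Next I would exhibit the opposite almost Hermitian structure $I$ by its K\"ahler form $\om_I$, obtained from $\om_J$ by reversing the sign of the summand carried by the distribution complementary to the one on which $I$ and $J$ agree (the same recipe as for the generalized Taub-Nut metrics), and verify that $(M,g,J)$ is QCH with $I$ integrable. For the latter I repeat the fixed-locus argument of Section 2: on the zero set $L$ of one of the generating Killing fields the endomorphism $\n Y_i$ annihilates $TL$, commutes with $J$, and commutes or anticommutes with $I$, so $TL$ equals $\ker(IJ-\mathrm{id})$ or $\ker(IJ+\mathrm{id})$, i.e. $TL=\De$ or $TL=\E$; then $[\,\n Y_1,\n Y_2\,]=R(Y_1,Y_2)=0$ on $L$ forces $\operatorname{Null}I=TL$, so one of $\De,\DE$ coincides with $\De_0=\operatorname{span}\{\theta^\sharp,I\theta^\sharp\}=\operatorname{Null}I$ and is integrable, while $|\n I|>0$ off a lower-dimensional set (as $\|\n(\xi-\eta)\|^2$ vanishes only at the vertex, exactly as in Section 2). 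By Definition 1.1 and the characterization recalled in the Introduction this is precisely the generalized Calabi type condition $d(\om_J-\om_I)=\phi\w(\om_J-\om_I)$ or $d(\om_J+\om_I)=\phi\w(\om_J+\om_I)$.

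Finally, with explicit formulas for $\om_J$ and $\om_I$ in hand, I compute $\phi:=\tfrac12(\om_J+\om_I)$ and $\tfrac12(\om_J-\om_I)$ and check, as in Theorem 3.2, that $d\phi$ is a closed multiple of $\phi$, in fact
\begin{equation*}d\phi=d\!\bigl(\ln(1+x^2)\bigr)\w\phi,\end{equation*}
so that $d(\om_J+\om_I)=\psi\w(\om_J+\om_I)$ with $\psi=\tfrac{2x}{1+x^2}\,dx$ exact. This already gives that $(M,g,J)$ is of generalized Calabi type; moreover $\tfrac12(\om_J-\om_I)$ contains only $dx\w d\theta_1$ and $dx\w d\theta_2$ terms, whence $\psi\w(\om_J-\om_I)=0$, and since $\om_J$ is closed this yields $d\om_I=\psi\w(\om_J+\om_I)=2\psi\w\om_I$, i.e. the Lee form of $I$ is the exact form $\psi$. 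Hence $I$ is conformally K\"ahler (globally, as $M=\mathbb C^2$ is simply connected), and by the Introduction $(M,g,J)$ is of Calabi type. The main obstacle is fixing the signs in $\om_I$ — equivalently, deciding which of $\De,\DE$ is $\De_0$ — and organizing the differentiation so that $d\phi$ visibly becomes an exact multiple of $\phi$; computing $\om_J$ from the moment map and the remaining verifications are routine.
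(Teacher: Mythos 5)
Your final computation is, up to replacing $I$ by $-I$, exactly the paper's: with your sign convention $\tfrac12(\om_J+\om_I)=(1+x^2)dy\wedge d\theta_2$, and your identities $d\phi=d\ln(1+x^2)\wedge\phi$, $\psi\wedge(\om_J-\om_I)=0$, $d\om_I=2\psi\wedge\om_I$ reproduce the printed steps $\om_J=xdx\wedge d\theta_1+(1+x^2)dy\wedge d\theta_2+2xydx\wedge d\theta_2$, $d\om_I=\tfrac{4x\,dx}{1+x^2}\wedge\om_I$, Lee form $\theta=d\ln(1+x^2)$, so the Calabi-type conclusion is reached the same way. The genuine divergence is in how $I$ is produced and certified. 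The paper does not obtain $\om_I$ by flipping a summand of $\om_J$: it sets $\om_I=\frac1{|Ric|}(-d\mathcal{R}^1\wedge d\theta_1-d\mathcal{R}^2\wedge d\theta_2)$, i.e. the normalized Ricci form, which is automatically a $g$-compatible opposite form because the scalar curvature vanishes (this is also the source of the $W^-$ remarks in the introduction), and it then checks integrability directly via $N_I=0$. Your sign-flip recipe is circular as a definition (you must already know on which distribution $I=\pm J$ to know which summand to flip), and the substitute you offer, the fixed-locus argument of Section 2, cannot carry that weight: in Section 2 that argument is applied after the metric has been identified with an orthotoric one, so the QCH distributions $\De,\E$ are already known to exist, and in any case it only gives $\operatorname{Null}I=TL$ along the two-dimensional locus $L$, not the coincidence of $\De$ or $\DE$ with $\De_0$ on all of $M$ that Definition 1.1 requires. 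In your plan this step is redundant rather than fatal, because everything you actually need --- that the flipped form is $g$-compatible, that $I$ is integrable, and the identities $d(\om_J\pm\om_I)=\psi\wedge(\om_J\pm\om_I)$ with $\psi$ exact --- is verifiable routinely from the explicit formulas; but to make the argument sound you should replace the fixed-locus appeal by the paper's two checks, namely $\om_I\propto\rho$ (or a direct verification that your $\om_I$ has the right algebraic type) together with $N_I=0$.
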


\begin{proof}   First we introduce global coordinates on  $M$. These new  coordinates are $x_1=x\cos\theta_1,  y_1=x\sin\theta_1,  x_2=y,  y_2=\theta_2$.
Note that
$\frac{\p}{\p x_1}=\cos\theta_1\frac{\p}{\p x}-\frac1x\sin\theta_1\frac{\p}{\p\theta_1},  \frac{\p}{\p x_1}=\frac{\p}{\p y},  \frac{\p}{\p y_1}=\frac{\p}{\p\theta_2}  $.
It follows that

$g(\frac{\p}{\p x_1},\frac{\p}{\p x_1})=1+x_1^2-\frac{y_1^2}{1+x^2},  g(\frac{\p}{\p x_1},\frac{\p}{\p y_1})=x_1y_1+\frac{x_1y_1}{1+x^2}, g(\frac{\p}{\p x_1},\frac{\p}{\p x_2})=0$,

$g(\frac{\p}{\p x_1},\frac{\p}{\p y_2}) =-\frac{2y_1x_2}{1+x^2}, g(\frac{\p}{\p x_2},\frac{\p}{\p x_2})=1+x^2, g(\frac{\p}{\p y_2},\frac{\p}{\p y_2})=\frac{(1+x^2)^2+4x^2x_2^2}{1+x^2}$.

The K\"ahler form  of $(M,g,I)$ is     $\omega_I=\frac{\sqrt{2}}{|Ric|}(-d\mathcal R^1\wedge d\theta_1-d\mathcal R^2\wedge d\theta_2)=xdx\wedge d\theta_1-(1+x^2)dy\wedge d\theta_2+2xydx\wedge d\theta_2$, where $|Ric|=\sqrt{g(Ric,Ric)}$ and $\mathcal R^2=\frac{2y}{1+x^2},\mathcal R^1=\frac1{1+x^2}$ are Ricci potentials  (see [W]).
Hence

$I\frac{\p}{\p x}=\frac{(1+x^2)}{x}\frac{\p}{\p\theta_1},   I\frac{\p}{\p y}=-\frac{\p}{\p\theta_2}+2y\frac{\p}{\p\theta_1}$

and

$I\frac{\p}{\p\theta_2}=-\frac{2xy}{1+x^2}\frac{\p}{\p x}+\frac{\p}{\p y},  I\frac{\p}{\p\theta_1}=-\frac x{1+x^2}\frac{\p}{\p x}$.

 The  K\"ahler form   of  $(M,g,J)$ is $\omega_J=xdx\wedge d\theta_1+(1+x^2)dy\wedge d\theta_2+2xydx\wedge d\theta_2$.  Since    $d\omega_I=\frac{4xdx}{1+x^2}\wedge\omega_I$,   it follows that the Lee form for  $(M,g,I)$ is $\theta=d\ln(1+x^2)$, i.e. $d\omega_I=2\theta\wedge\omega_I$.
One can easily  check that   $N_I(X,Y)=0$, where $N_I$ is the Nijenhuis tensor of the almost Hermitian structure $I$.  Hence  $I$ is integrable  and $(M,g,I)$  is conformally K\"ahler.  Thus  $(M,g,J)$  is a K\"ahler surface of Calabi type. The Killing fields are  $Y_1= x_1\frac{\p}{\p y_1}-y_1\frac{\p}{\p x_1}, Y_2= \frac{\p}{\p y_2}$.

Note that on  $M$ acts a cylinder  $\mathbb R\times S^1$  rather than   a torus $T^2$.\end{proof}

\section{  The  Burns  metric.}  Burns proved that the metric  $\omega=-i\p\bp (||z||^2+m\ln ||z||^2)$ on  $\mathbb C^2-\{0\}$  extends to the K\"ahler metric on the blow-up of $\mathbb C^2$  at $0$. Let us introduce the coordinates $(u,z)$ on  $\widehat{\mathbb C^2}$ such that for standard coordinates $z_1,z_2$  on $\mathbb C^2$  we have $z_1=z,z_2=uz$ or in the second map $z_1=uz,z_2=z$ on $\mathbb C^2-\{0\}$. Then  $||z||^2=|z_1|^2+|z_2|^2=|z|^2(1+|u|^2)$ and consequently

$\p\bp (||z||^2)=\p\bp (|z|^2(1+|u|^2)=\p(zd\overline{z}+|u|^2zd\overline{z}+|z|^2ud\overline{u}=dz\wedge d\overline{z}+|u|^2dz\wedge d\overline{z}+\overline{u}zdu\wedge d\overline{z} +    |z|^2du\wedge d\overline{u}+\overline{z}udz\wedge d\overline{u}$,

$\p\bp (\ln ||z||^2)=\p\bp (\ln |z|^2(1+|u|^2)=\p\bp (\ln(1+|u|^2)=\p(\frac{ud\overline{u}}{1+|u|^2})=\frac{du\wedge d\overline{u}}{1+|u|^2}-\frac{u \overline{u} du\wedge d\overline{u}}{(1+|u|^2)^2}=
\frac{ du\wedge d\overline{u}}{(1+|u|^2)^2}$.

\begin{thm}  The space $\widehat{\mathbb C^2}$ with the Burns metric  is a QCH K\"ahler surface of  Calabi  type.\end{thm}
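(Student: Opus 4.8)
The plan is to follow the scheme of the proofs of Theorems 3.1 and 4.1: to exhibit the opposite Hermitian structure $I$ explicitly, to check that it is integrable and locally conformally K\"ahler, and to verify the Calabi-type identity $d(\om_J\mp\om_I)=\phi\w(\om_J\mp\om_I)$ for an explicit closed $1$-form $\phi$. From the computation recorded above, in the chart $(u,z)$ with $z_1=z$, $z_2=uz$ the K\"ahler form of the Burns metric is
\begin{equation*}\om_J=(1+|u|^2)dz\w d\bar z+\bar z u\,dz\w d\bar u+z\bar u\,du\w d\bar z+\Bigl(|z|^2+\frac m{(1+|u|^2)^2}\Bigr)du\w d\bar u,\end{equation*}
with $J$ the standard complex structure; it extends smoothly across the exceptional divisor $E=\{z=0\}$ and restricts there to $\frac m{(1+|u|^2)^2}du\w d\bar u$, the Fubini--Study form of $E\cong\mathbb P^1$. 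The surface is toric: the rotations $Y_1$ of $z$ and $Y_2$ of $u$, generating the maximal torus of $U(2)$ (which acts by isometries), are commuting holomorphic Killing fields with K\"ahler potential $f=\|z\|^2+m\ln\|z\|^2$, and I would pass to coordinates $|z|^2,|u|^2,\theta_1,\theta_2$ adapted to the torus action, as in Sections 2--4.

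A short computation from $f$ gives $\det(g_{\alpha\bar\beta})=|z|^2+\frac m{1+|u|^2}=:D$, so the Ricci form $\rho$ is proportional to $\p\bp\ln D$, and one finds
\begin{equation*}\p\bp\ln D=\frac m{(1+|u|^2)^2D^2}\,\om_J-\frac{2m}{(1+|u|^2)D}\,\p\bp\ln\|z\|^2.\end{equation*}
Taking the $\om_J$-trace of this formula gives scalar curvature zero, so the Burns metric is scalar flat and $\rho$ is a primitive, anti-self-dual $(1,1)$-form with respect to $J$ (see also [B],[L]); it is nowhere zero on the open dense set $M_0$ on which $\n\rho\ne0$, and $M_0\ne\emptyset$ precisely because $m\ne0$, i.e.\ because $g$ is not hyperk\"ahler. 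On $M_0$ I would define the opposite almost Hermitian structure $I$ by $\om_I:=\sqrt2\,\rho/|\rho|$, with the sign fixed so that $I$ is compatible with the reverse orientation; then the Ricci tensor is automatically $I$-invariant, and since $\om_J$ is self-dual while $\om_I$ is anti-self-dual, $\om_J\perp\om_I$ pointwise, so both $\om_J+\om_I$ and $\om_J-\om_I$ are decomposable $2$-forms.

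Next I would verify $N_I=0$ directly in the coordinates $(u,z)$ — a finite explicit computation, just as in the proof of Theorem 4.1 — so that $I$ is Hermitian; since moreover $\om_I\propto\rho$ the Ricci tensor is $I$-invariant, and (as $g$ is anti-self-dual) $(M,g,I)$ satisfies (G2), so $(M,g,J)$ is a QCH K\"ahler surface with integrable $I$. Writing $\om_I$ out explicitly and forming $\om_J\pm\om_I$, I would then show that one of these, say $\Psi$, satisfies $d\Psi=d\ln h\w\Psi$ with $h$ a positive function of $|u|^2$ alone; I expect $h$ to be a power of $1+|u|^2$, by analogy with the factors $1+x^2$ and $1+2u^2$ appearing in Theorems 3.1 and 4.1, and because, up to this conformal factor, $\Psi$ is the pull-back to $\widehat{\mathbb C^2}$ of the Fubini--Study form of the base $\mathbb P^1$ of $\widehat{\mathbb C^2}\to\mathbb P^1$. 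By the criterion recalled in the Introduction this shows that $(M,g,J)$ is of generalized Calabi type; the complementary combination vanishes on $\operatorname{Null}(I)=\operatorname{span}\{Y_1,JY_1\}$ — the fibre direction of $\widehat{\mathbb C^2}\to\mathbb P^1$ — which is therefore the integrable distribution $\mathcal D_0$ coinciding with $\De$ or $\DE$. Finally $d\om_I=2\theta\w\om_I$ with the closed Lee form $\theta=\tfrac12 d\ln h$, so that $(M,g,I)$ is locally conformally K\"ahler; hence $(M,g,J)$ is of Calabi type.

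The main work is the explicit determination of $\rho$, of $\om_I$, and of the conformal factor $h$, together with the verification of the Calabi identity; getting the signs and the normalisation of $\om_I$ right is the delicate point, and one must watch the behaviour near $E=\{z=0\}$ and near the torus fixed-point set, where $\n I$ degenerates — this is harmless, since being of Calabi type only requires the structure on an open dense subset and $|\n I|>0$ on $M_0$ because $g$ is not hyperk\"ahler. The relation $\om_I\propto\rho$ obtained along the way is exactly what forces $W^-$ to be degenerate with $\rho$ a distinguished eigenvector, the phenomenon observed by Weber.
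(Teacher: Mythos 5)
Your proposal is essentially correct and runs along the same route as the paper: compute $\rho=\p\bp\ln\bigl(|z|^2+\frac m{1+|u|^2}\bigr)$ from the potential, observe $\om_J\w\rho=0$ (scalar flatness, so $\rho$ is primitive and anti-self-dual), take $\om_I$ proportional to $\rho$, and verify the Calabi identity for $\om_J-\om_I$. The one genuine divergence is how integrability of $I$ is obtained: you plan a direct Nijenhuis computation $N_I=0$ from the explicit $\om_I$, whereas the paper avoids any such computation by transporting the problem to $\mathbb{CP}^2$ via the inversion $\phi(z)=z/\|z\|^2$, which extends to a diffeomorphism $\widehat{\mathbb C^2}\rightarrow\mathbb{CP}^2\setminus\{[1,0,0]\}$; one checks that $\om_J$ and $\rho$ are invariant under the pulled-back complex structure $I$ of $\mathbb{CP}^2$, so $I$ is integrable for free and $\rho/|\rho|$ is identified as its K\"ahler form $\om_I$. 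Your route costs a longer explicit computation but needs no global identification; the paper's identification buys integrability and the global description of $I$ on the blow-up at once. One concrete misprediction in your sketch: the conformal factor is not a function of $|u|^2$ alone, and in fact no $h=h(|u|^2)$ could work, since $\tfrac12(\om_J-\om_I)=\frac{\|z\|^2+m}{(1+|u|^2)^2}\,du\w d\overline u$ has exterior derivative with $dz$, $d\overline z$ components while $d\ln h(|u|^2)\w du\w d\overline u=0$; the correct statement is $d(\om_J-\om_I)=d\ln(\|z\|^2+m)\w(\om_J-\om_I)$, exactly as in the paper. Since the Calabi-type criterion only asks for the existence of some $1$-form $\phi$, this does not derail your argument --- the computation you propose would simply return $h=\|z\|^2+m$ --- but the analogy with the factors $1+2u^2$ and $1+x^2$ of Theorems 3.1 and 4.1 is misleading here, and the Lee form is correspondingly not closed off the level sets of $\|z\|^2$ in the way your final remark suggests.
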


\begin{proof} We have $\omega=-i((1+|u|^2)dz\wedge d\overline{z}+(|z|^2+\frac m{(1+|u|^2)^2})du\wedge d\overline{u}+\overline{u}zdu\wedge d\overline{z}+u\overline{z}dz\wedge d\overline{u})$.

One can also see that   $\omega^2=-2(|z|^2+\frac m{1+|u|^2})dz\wedge d\overline{z}\wedge du\wedge d\overline{u})$   and the Ricci form is

$\rho=-i\p\bp\ln(|z|^2+\frac m{1+|u|^2})$.

Note that $\bp\ln(|z|^2+\frac m{1+|u|^2})=\frac1{|z|^2+\frac m{1+|u|^2}}(zd\overline{z}-\frac{mud\overline{u}}{(1+|u|^2)^2})$.

We also  have
\begin{align*} \p\bp\ln(|z|^2+\frac m{1+|u|^2})=\frac1{|z|^2+\frac m{1+|u|^2}}(dz\wedge d\overline{z}-\frac{mdu\wedge d\overline{u}}{(1+|u|^2)^2}+\frac{2m|u|^2du\wedge d\overline{u}}{(1+|u|^2)^3})
\\ -\frac1{(|z|^2+\frac m{1+|u|^2})^2}(\overline{z}dz- \frac{m\overline{u}du}{(1+|u|^2)^2})\wedge (zd\overline{z}-\frac{mud\overline{u}}{(1+|u|^2)^2}).\end{align*}

   Consequently \begin{align*}\rho=\frac {-im}{(|z|^2+|z|^2|u|^2+m)^2}((1+|u|^2)dz\wedge d\overline{z}+\\\frac{(|z|^2+|z|^2|u|^2+m)(|u|^2-1)-m|u|^2}{(1+|u|^2)^2}du\wedge d\overline{u}+\overline{u}zdu\wedge d\overline{z}+u\overline{z}dz\wedge d\overline{u}).\end{align*}

    Thus  \begin{align*}\omega\wedge \rho= -\frac m{(|z|^2+|z|^2|u|^2+m)^2}(\frac{(|z|^2+|z|^2|u|^2+m)(|u|^2-1)-m|u|^2}{(1+|u|^2)}+\\(|z|^2+\frac m{(1+|u|^2)^2})(1+|u|^2)-2|u|^2|z|^2)du\wedge d \overline{u}\wedge dz\wedge d\overline{z}=0.\end{align*}

The diffeomorphism    $\phi:\mathbb C^2-\{0\}\rightarrow \mathbb C^2-\{0\}$,  $\phi(z)=\frac z{||z||^2}$, extends to the diffeomorphism   $\phi:\widehat{\mathbb C^2}\rightarrow\mathbb {CP}^2-\{[1,0,0]\}$( see [L1]).   Note that in the first map
$\phi(u,z)=[|z|^2(1+|u|^2),z,uz]=[\overline{z}(1+|u|^2),1,u]$   and in the second map  $\phi(u,z)=[|z|^2(1+|u|^2),uz,z]=[\overline{z}(1+|u|^2),u,1]$.   These are smooth mappings and $\phi((0,[u_1,u_2]))=[0,u_1,u_2]$.  Hence it is a diffeomorphism. We shall identify  $\widehat{\mathbb C^2}$ with its image in  $\mathbb{CP}^2$ with  complex structure and the metric  implanted by $\phi$.

If  $z_1,z_2$ are this time the coordinates on  $\mathbb{CP}^2$ then  $z_1=\overline{z}(1+|u|^2),z_2=u$   and  $dz_1=d\overline{z}(1+|u|^2)+\overline{z}(ud\overline{u}+\overline{u}du), dz_2=du$.  Note that   $u=z_2,  z=\frac{\overline{z_1}}{1+|z_2|^2}$.  Consequently

$du=dz_2, dz=\frac{\overline{dz_1}}{1+|z_2|^2}-\overline{z_1}\frac{z_2d\overline{z_2}+\overline{z_2}dz_2}{(1+|z_2|^2)^2}.$

 $\omega=-i(((1+|u|^2)\frac{\overline{dz_1}}{1+|z_2|^2}-\overline{z_1}\frac{z_2d\overline{z_2}+\overline{z_2}dz_2}{(1+|z_2|^2)^2})\wedge (\frac{dz_1}{1+|z_2|^2}-z_1\frac{\overline{z_2}dz_2+z_2d\overline{z_2}}{(1+|z_2|^2)^2})+(|\frac{\overline{z_1}}{1+|z_2|^2}|^2+\frac m{(1+|z_2|^2)^2})dz_2\wedge d\overline{z_2}+\overline{z_2}\frac{\overline{z_1}}{1+|z_2|^2}dz_2\wedge (\frac{dz_1}{1+|z_2|^2}-z_1\frac{\overline{z_2}dz_2+z_2d\overline{z_2}}{(1+|z_2|^2)^2})+z_2\frac{z_1}{1+|z_2|^2}(\frac{\overline{dz_1}}{1+|z_2|^2}-\overline{z_1}\frac{z_2d\overline{z_2}+\overline{z_2}dz_2}{(1+|z_2|^2)^2}\wedge d\overline{z_2}))$

   Hence $\omega(I,I)=\omega$,  similarly   $\rho(I,I)=\rho$.    It follows that  $\omega,\rho$ are $I,J$ invariant.    Consequently    $\frac{\sqrt{2}}{|\rho|}\rho=\omega_I$   where  $I$  is the complex structure of  $\mathbb{CP}^2$.

Note that   $\rho\wedge\rho=-2(\frac m{(|z|^2+|z|^2|u|^2+m)^2})^2vol$   which implies

 $\omega_I=-i((1+|u|^2)dz\wedge d\overline{z}+\frac{(|z|^2+|z|^2|u|^2+m)(|u|^2-1)-m|u|^2}{(1+|u|^2)^2}du\wedge d\overline{u}+\overline{u}zdu\wedge d\overline{z}+u\overline{z}dz\wedge d\overline{u})$.

 We also have  $\frac12d(\omega_J-\omega_I)=d\ln(|z|^2+|z|^2|u|^2+m)\wedge\frac12(\omega_J-\omega_I)=d\ln(||z||^2+m)\wedge\frac12(\omega_J-\omega_I)$.    Hence the Burns metric is of Calabi type.\end{proof}

 \vskip1cm
 {\bf Acknowledgements }    The author thanks the reviewer for all his remarks which improved the paper.
\vskip1cm
\centerline{\bf References.}
\par
\medskip
[A-C-G] V. Apostolov, D. Calderbank and P. Gauduchon {\it The geometry of weakly self-dual K\"ahler surfaces}, Compositio Mathematica 135,No.3 (2003), 279-322.
\par
\medskip
[A-G] V. Apostolov, P. Gauduchon, { \it The Riemannian Goldberg-Sachs Theorem}, Internat. J. Math. 8 (1997), 421-439.
\par
\medskip
[B] D. Burns, {\it  Twistors and harmonic maps}, Lecture, Amer. Math. Soc. Conference,  Charlotte, NC, October 1986
\par
\medskip
[D] A. Derdzi\'nski, {\it  Self-dual K\"ahler manifolds and Einstein manifolds of dimension four}, Compos. Math. 49, (1983), 405-433.
\par
\medskip
[Do] S. Donaldson, {\it Constant scalar curvature metrics on toric surfaces}, Geometric and Functional Analysis, 19, No.1, (2009), 83-136
\par
\medskip
[D-T]   M. Dunajski and P. Tod, {\it  Four-dimensional metrics conformal to K\"ahler },
Mathematical Proceedings of the Cambridge Philosophical Society, Volume 148, (2010), 485-503.
\par
\medskip
[G-M] G. Ganchev and V. Mihova,{\it K\"ahler manifolds of quasi-constant holomorphic sectional curvature}, Central European Journal of Mathematics, 6, (2008),43-75.
\par
\medskip
[G-R] G.Gibbons, P.Ruback,   {\it The hidden symmetries of multi-centre metrics} Comm.  Math.  Phys. 115,  (1988), 267-300.

[G] A. Gray,  {\it Curvature identities for Hermitian
and almost Hermitian manifolds}, Tohoku Math. Journ,
28 (1976), 601-612.
\par
\medskip
[J-1] W. Jelonek, {\it K\"ahler surfaces with quasi constant holomorphic curvature}, Glasgow Math. J. 58, (2016), 503-512.
\par
\medskip
[J-2] W. Jelonek, {\it Semi-symmetric  K\"ahler surfaces}, Colloq. Math.  148, (2017), 1-12.
\par
\medskip
[J-3] W. Jelonek, {\it Complex foliations and  K\"ahler QCH surfaces},  Colloq. Math.  156, (2019), 229-242.
\par
\medskip
[J-4] W. Jelonek, {\it Einstein Hermitian and anti-Hermitian 4-manifolds}, Ann.  Po\-lon. Math. 81, (2003), 7-24.
\par
[J-5] W. Jelonek, {\it  QCH  K\"ahler surfaces II},  Journal of Geometry and Physics, (2020), 103735.
\par
\medskip
[J-6] W. Jelonek, {\it Generalized  orthotoric   K\"ahler surfaces},  Ann. Polon.  Math., (2022), 193-120.
\par
\medskip
[J-7] W.Jelonek, {\it Compact pseudosymmetric K\"ahler Manifolds}, Colloq.  Math, 117.2,   (2009), 243-249.
\par
\medskip
[J-8] W.Jelonek,  {\it K\"ahler manifolds with quasi-constant holomorphic curvature},  Ann. of Global Analysis and Geometry, 36, No.2, (2009)
\par
\medskip
[J-M] W. Jelonek and E. Mulawa, {\it Generalized  Calabi type  K\"ahler surfaces},  Journal of Geometry and Physics, 182,(2022)

\par
\medskip
[L]  C. Lebrun,  {\it  Explicit self-dual metrics on  $\mathbb{CP}^2\sharp...\sharp\mathbb{CP}^2$},  J.  Diff.  Geom.  34, (1991), 223-253.
\par
\medskip
[L1]  C. Lebrun,  {\it Poon's self-dual metrics and K\"ahler geometry }, J. Diff. Geom, 28, (1988), 341-343.
\par
\medskip
[M] E. Mulawa, {\it A detailed discription of generalized Calabi type K\"ahler surfaces}, Journal of Geometry and Physics 198, (2024), 105112
\par
\medskip
[N] T. Noda,  {\it  A special Lagrangian fibration in the Taub-NUT space}, J.Math. Soc. of Japan, vol.60, No.3, (2008), 653-663.
\par
\medskip
[O] Z. Olszak, {\it Bochner flat K\"ahlerian manifolds with a certain condition on the Ricci tensor}, Simon Stevin, 63, (1989), 295-303
\par
\medskip
[W] B. Weber, {\it Generalized K\"ahler Taub-NUT metrics and two exceptional instantons}  Communications in Analysis and Geometry, 30, No.7, (2022), 1575-1632.

\end{document}